\numberwithin{equation}{section}
\newtheorem{thm}{Theorem}[section]
\newtheorem*{thm-nn}{Theorem}
\newtheorem{lem}[thm]{Lemma}
\newtheorem{pro}[thm]{Proposition}
\newtheorem*{pro-nn}{Proposition}
\newtheorem{cor}[thm]{Corollary}
\newtheorem*{cor-nn}{Corollary}
\newtheorem*{conj-nn}{Conjecture}
\theoremstyle{definition}
\theoremstyle{remark}
\newtheorem*{ack}{Acknowledgement}
\newcommand{\F}{\mathbb{F}}
\newcommand{\Q}{\mathbb{Q}}
\newcommand{\Z}{\mathbb{Z}}
\newcommand{\sph}{\mathbb{S}}
\newcommand{\C}{\mathbb{C}}
\newcommand{\rank}{{\operatorname{rank}}}
\newcommand{\signmod}{\F_p[-1]}
\begin{document}
\title[Vaninsing ranges for alternating subgroups]{
Vanishing ranges for the mod $p$ cohomology of alternating subgroups of
Coxeter groups}
\author[T. Akita]{
Toshiyuki Akita}
\author[Y. Liu]{Ye Liu}

\address{Department of Mathematics, Hokkaido University,
Sapporo, 060-0810 Japan}
\email{akita@math.sci.hokudai.ac.jp}
\email{liu@math.sci.hokudai.ac.jp}
\subjclass[2010]{Primary~20F55, 20J06; Secondary~55N91}

\maketitle

\begin{abstract}
We obtain vanishing ranges for the mod $p$ cohomology of alternating
subgroups of finite $p$-free Coxeter groups.
Here a Coxeter group $W$ is $p$-free if the order of the product $st$ is
prime to $p$ for every pair of Coxeter generators $s,t$ of $W$.
Our result generalizes those for alternating groups
formerly proved by Kleshchev-Nakano and Burichenko.
As a byproduct, we obtain vanishing ranges for
the twisted cohomology of finite $p$-free Coxeter groups
with coefficients in the sign representations.
In addition, a weak version of the main result is proved for
a certain class of infinite Coxeter groups.
\end{abstract}

\section{Introduction}
Let $(W,S)$ be a Coxeter system, the pair of a Coxeter group $W$ and the set $S$
of Coxeter generators of $W$.
The alternating subgroup $A_W$ of
$W$ is the kernel of the sign homomorphism
$W\rightarrow\{\pm 1\}$ defined by $s\mapsto -1$ $(s\in S)$.
The symmetric group $\Sigma_n$ on $n$ letters is a finite Coxeter group (of type $A_{n-1}$),
and its alternating subgroup is nothing but the alternating group $A_n$ on $n$ letters.
Cohomology of alternating groups has been studied by various authors.
Among others, 
Kleshchev-Nakano \cite{MR1875898}*{p.~354 Corollary} and
Burichenko \cite{MR2015284}*{Theorem 1.4}
independently obtained vanishing ranges for the mod $p$ cohomology
of alternating groups:
\begin{thm}[Kleshchev-Nakano, Burichenko]\label{thm-alt}
Let $p$ be an odd prime.
The mod $p$ cohomology $H^{k}(A_{n},\F_{p})$
vanishes for $0<k<p-2$.
\end{thm}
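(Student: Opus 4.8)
The plan is to relate the cohomology of $A_n$ to that of the ambient symmetric group $\Sigma_n$ through Shapiro's lemma. Since $[\Sigma_n:A_n]=2$ and $p$ is odd, the permutation module splits as $\Ind_{A_n}^{\Sigma_n}\F_p\cong\F_p[\Sigma_n/A_n]\cong\F_p\oplus\signmod$, where $\signmod$ denotes the sign representation over $\F_p$. Shapiro's lemma then yields
\[
H^*(A_n,\F_p)\cong H^*(\Sigma_n,\F_p)\oplus H^*(\Sigma_n,\signmod),
\]
so it suffices to show that each summand vanishes in the range $0<k<p-2$. Note also that for $n<p$ the group $A_n$ has order prime to $p$, so everything vanishes trivially; the content is therefore concentrated in the case $n\ge p$.

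For the untwisted summand I would show that $H^k(\Sigma_n,\F_p)=0$ holds well beyond the required range. Restriction to a Sylow $p$-subgroup $P$ is injective and identifies $H^*(\Sigma_n,\F_p)$ with the $N_{\Sigma_n}(P)$-stable classes in $H^*(P,\F_p)$. The Weyl-group invariance forces all low-degree classes to die: already for $n=p$, where $P=\Z/p$ and the stabilizing quotient $\Z/(p-1)$ acts on $H^*(\Z/p,\F_p)=\Lambda(x)\otimes\F_p[y]$ ($|x|=1$, $|y|=2$) through a primitive root $g$, the invariance condition $g^{\varepsilon+j}=1$ first admits a positive-degree solution at $xy^{p-2}$ in degree $2p-3$. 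This comfortably exceeds $p-2$, so the untwisted summand contributes nothing in the relevant range.

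The crux is the twisted summand $H^*(\Sigma_n,\signmod)$, and here the oddness of $p$ is decisive: every element of $p$-power order in $\Sigma_n$ is an even permutation, so $P\subseteq A_n$ and $\signmod$ restricts trivially to $P$. Thus $H^*(\Sigma_n,\signmod)$ is the subspace of $H^*(P,\F_p)$ stable for the \emph{sign-twisted} fusion action. I would settle the base case $n=p$ first: the generating coset of $N_{\Sigma_p}(P)/P\cong\Z/(p-1)$ is represented by multiplication by $g$ on the nonzero residues, a $(p-1)$-cycle, hence an odd permutation, so its sign is $-1=g^{(p-1)/2}$. The twisted invariance condition on $x^{\varepsilon}y^{j}$ becomes $\varepsilon+j\equiv(p-1)/2\pmod{p-1}$, whose minimal positive-degree solution is $(\varepsilon,j)=(1,(p-3)/2)$, giving the class $xy^{(p-3)/2}$ in degree exactly $p-2$. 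This simultaneously proves vanishing for $0<k<p-2$ and sharpness at $k=p-2$ when $n=p$.

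The main obstacle is to propagate the twisted computation from $\Sigma_p$ to arbitrary $n$. I would reduce to the base case via the wreath-product description of $P=\mathrm{Syl}_p(\Sigma_n)$ dictated by the base-$p$ expansion of $n$, arguing that the sign-twisted stable classes are assembled from those for $\Sigma_p$ and so cannot descend below degree $p-2$. Tracking how the sign character interacts with the iterated Weyl-group actions on the cohomology of iterated wreath products of $\Z/p$ is the delicate point, and it is where I expect the real work to lie; the transfer reduction and the single-prime computation above are comparatively formal.
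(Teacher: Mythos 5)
Your opening reduction is sound: the splitting $H^k(A_n,\F_p)\cong H^k(\Sigma_n,\F_p)\oplus H^k(\Sigma_n,\signmod)$ via $\Ind_{A_n}^{\Sigma_n}\F_p\cong\F_p\oplus\signmod$ and Shapiro's lemma is exactly the isomorphism (\ref{sign-split}) in the paper --- but the paper uses it in the \emph{opposite} direction, first proving the vanishing for $A_W$ by induction on rank via the $A_W$-action on the Coxeter complex (a sphere whose $A_W$-orbit space is again a sphere, with isotropy groups alternating subgroups of proper parabolics), and only then deducing $H^k(\Sigma_n,\signmod)=0$ for $k<p-2$ as a corollary. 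Your base case $n=p$ is correct and complete: the twisted invariance condition $\varepsilon+j\equiv(p-1)/2 \pmod{p-1}$ on $x^{\varepsilon}y^{j}$ does first admit the solution $xy^{(p-3)/2}$ in degree $p-2$, matching the paper's citation that $H^{p-2}(\Sigma_p,\signmod)\cong\F_p$ and its sharpness remark.

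However, there is a genuine gap precisely where you defer "the real work": nothing is actually proved for $n>p$, and that case is the entire content of the theorem. Two concrete problems. First, your identification of $H^*(\Sigma_n,\F_p)$ (and its twisted analogue) with the $N_{\Sigma_n}(P)$-stable classes in $H^*(P,\F_p)$ is false once $P$ is nonabelian, i.e.\ for $n\geq p^2$: by Cartan--Eilenberg one must take elements stable under \emph{all} fusions $P\supseteq Q\rightarrow {}^{g}Q\subseteq P$, and normalizer invariance is strictly weaker; Burnside's fusion argument rescues the normalizer statement only while $P$ is abelian, i.e.\ for $p\leq n<p^2$ (even there, for $n\geq p+2$ the normalizer contains elements acting trivially on $H^*(P,\F_p)$ but with sign $-1$, so the twisted condition changes shape and must be reanalyzed --- an easy but unaddressed point). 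Second, the assertion that the sign-twisted stable classes of an iterated wreath product $\Z/p\wr\cdots\wr\Z/p$ "are assembled from those for $\Sigma_p$ and so cannot descend below degree $p-2$" is a statement of the desired conclusion, not an argument: $H^*(\Z/p\wr\Z/p,\F_p)$ already contains many classes in degrees below $p-2$, and showing that none of them survives the twisted stability conditions is exactly the delicate computation (carried out in the literature by Nakaoka-type methods, and bypassed entirely by the paper's geometric induction). As written, the proposal proves the theorem only for $n=p$ (and trivially for $n<p$), so it is a correct strategy outline with its central step missing rather than a proof.
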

The primarly purpose of this paper is to generalize Theorem \ref{thm-alt}
to alternating subgroups of finite Coxeter groups, and thereby to give an
alternative proof of Theorem \ref{thm-alt}.
Our main result is the following:
\begin{thm}\label{main-thm}
Let $p$ be an odd prime,
$W$ a finite $p$-free Coxeter group, and $A_W$ the alternating subgroup
of $W$.
Then the mod $p$ cohomology $H^{k}(A_W,\F_{p})$
vanishes for $0<k< p-2$.
\end{thm}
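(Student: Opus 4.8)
The plan is to prove, by induction on the rank $n=|S|$, the following stronger pair of vanishing statements for every finite $p$-free Coxeter group $W$: both $H^k(W,\F_p)=0$ and $H^k(W,\signmod)=0$ for $0<k<p-2$, where $\signmod$ is the mod $p$ sign representation. The theorem follows at once: since $[W:A_W]=2$ is prime to the odd prime $p$, the $\Z/2$-module $\F_p[W/A_W]$ splits as $\F_p\oplus\signmod$, so Shapiro's lemma gives $H^*(A_W,\F_p)\cong H^*(W,\F_p)\oplus H^*(W,\signmod)$, and both summands vanish in the asserted range. The induction is set up so that every proper standard parabolic $W_T$ ($T\subsetneq S$) is again a finite $p$-free Coxeter group of smaller rank, on which $\signmod$ restricts to the sign representation of $W_T$; this is what lets the inductive hypothesis be applied to stabilizers below.

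First I would dispose of the case $p\nmid|W|$, where $H^k(W,\F_p)=H^k(W,\signmod)=0$ for all $k>0$ and there is nothing to prove. So assume $p\mid|W|$. Then $W$ has an element $g$ of order $p$, and since $W$ acts faithfully on its reflection representation $\reg$ (of dimension $n$ over $\R$), the eigenvalues of $g$ include a primitive $p$-th root of unity; as the characteristic polynomial of $g$ is integral, \emph{all} $p-1$ primitive $p$-th roots occur, whence $n\ge p-1$. This inequality is exactly what is needed to bring the entire range $0<k<p-2$ (that is, $k\le p-3\le n-2$) within reach of the spectral sequence comparison below.

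Next I would bring in the Coxeter complex $\Delta$, homeomorphic to $\sph^{\,n-1}$, on which $W$ acts simplicially and type-preservingly, with simplex stabilizers the proper parabolics $W_T$ ($T\subsetneq S$) and with quotient $\Delta/W$ a single closed chamber, an $(n-1)$-simplex. Running the Serre spectral sequence of $\Delta\times_W EW\to BW$, and using that $H^b(\sph^{\,n-1};\F_p)$ is $\F_p$ for $b=0$, is $\signmod$ for $b=n-1$ (a reflection reverses orientation, so the top cohomology of the sphere is the sign representation), and vanishes otherwise, one gets just two nonzero rows. For $k\le n-2$ the row $b=n-1$ contributes nothing in total degree $k$ and no differential meets the corner $E_2^{k,0}$, so for $M\in\{\F_p,\signmod\}$ one obtains isomorphisms $H^k(W,M)\cong H^k_W(\Delta;M):=H^k(\Delta\times_W EW;M)$ for all $k\le n-2$, a range which by the previous paragraph contains every $k$ with $0<k<p-2$.

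It then remains to show $H^k_W(\Delta;M)=0$ for $0<k<p-2$, which I would extract from the isotropy spectral sequence $E_1^{a,b}=\bigoplus_{\sigma}H^b(W_\sigma;M)\Rightarrow H^{a+b}_W(\Delta;M)$, the sum running over $W$-orbits of $a$-simplices with standard parabolics as representatives. Because the action is type-preserving, stabilizers fix their simplices pointwise and no orientation twists intervene; because each $W_\sigma$ is a proper $p$-free parabolic, the inductive hypothesis kills $E_1^{a,b}$ for $0<b<p-2$. Hence in the range only the row $b=0$ survives, where $E_2^{a,0}=H^a(\Delta/W;\mathcal H^0)$ for the coefficient system $\sigma\mapsto M^{W_\sigma}$. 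For $M=\F_p$ this system is constant and $\Delta/W$ is contractible, giving $\F_p$ in degree $0$ and nothing in positive degrees; for $M=\signmod$ one has $M^{W_\sigma}=\signmod^{W_T}$, which is $\F_p$ exactly when $W_\sigma=1$ (the top chamber) and $0$ otherwise, so the system is concentrated on the unique top simplex and contributes only in degree $n-1\ge p-2$. Either way $H^k_W(\Delta;M)=0$ for $0<k<p-2$, closing the induction. The step I expect to be the main obstacle — and the one that pins down the precise bound — is the sign bookkeeping: identifying the top cohomology of $\Delta$ with $\signmod$, confirming that type-preservation removes all orientation twists, and checking that for sign coefficients the surviving $b=0$ row lives on the top chamber so its first contribution sits in degree $n-1$. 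Together with $n\ge p-1$, this is exactly what forces any nonvanishing class into degree $\ge p-2$ and makes the range sharp, as $W=\Sigma_p$, $A_W=A_p$ already illustrates.
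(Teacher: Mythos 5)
Your overall architecture is sound and packages the argument genuinely differently from the paper: where the paper quotients the Coxeter complex by $A_W$, proves the key geometric fact that $X_W/A_W$ is again a sphere (Proposition \ref{orbit-space}), and runs the two equivariant spectral sequences with trivial coefficients (Proposition \ref{pro:equivariant}) by induction on $|S|$, you stay $W$-equivariant and push the alternating subgroup into the coefficients via $\F_p[W/A_W]\cong\F_p\oplus\signmod$ --- exactly the splitting (\ref{sign-split}) that the paper uses \emph{afterwards} to deduce Theorem \ref{thm:signmod} from Theorem \ref{main-thm}, run in the opposite direction. Your two-row Serre spectral sequence comparison (valid for $k\le |S|-2$), the identification of the top row with sign coefficients (degree of an orthogonal map on the sphere equals its determinant), the isotropy spectral sequence with proper parabolic stabilizers killed by induction, and the observation that $\signmod^{W_T}=0$ for $T\neq\varnothing$ (so for sign coefficients the $b=0$ row is concentrated on the chamber in degree $|S|-1$) are all correct, and together they substitute for the paper's computation of $X_W/A_W$.

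The genuine gap is your rank bound $n\ge p-1$, which replaces the paper's base case. You assert that ``the characteristic polynomial of $g$ is integral'' for an element $g$ of order $p$ in the reflection representation. For finite Coxeter groups in general this is false: the geometric representation need not be defined over $\Q$, and for instance an order-$5$ element of $W(I_2(5))$ or of $W(H_3)$ has eigenvalues $\zeta_5^{\pm 1}$ (plus $1$), with irrational characteristic polynomial; these groups have $5$-torsion in rank $2$ and $3$, both less than $p-1=4$. Since your sentence nowhere invokes $p$-freeness, it proves too much --- $p$-freeness must enter precisely at this step. Two repairs are available: (i) the paper's route, via the classification (Proposition \ref{low-rank}): a finite $p$-free Coxeter group of rank at most $p-2$ has no $p$-torsion, the only cases to check beyond $A_n,B_n,D_n,I_2(m)$ being $W(H_3)$ for $p=5$ (not $5$-free) and $W(F_4),W(H_3),W(H_4)$ for $p=7$ (no $7$-torsion); or (ii) a classification-free patch of your own argument: the geometric representation is defined over $K=\Q\left(\cos(\pi/m(s,t)):s,t\in S\right)\subseteq\Q(\zeta_{2M})$ with $M=\operatorname{lcm}_{s,t}m(s,t)$ prime to $p$ by $p$-freeness, so $K\cap\Q(\zeta_p)=\Q$; hence $\operatorname{tr}(g^j)\in\Q$ for all $j$, the characteristic polynomial of $g$ lies in $\Z[x]$ and is divisible by the $p$-th cyclotomic polynomial, giving $n\ge p-1$. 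With either repair in place, the rest of your proof goes through.
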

Here a Coxeter group is $p$-free if the order of the product $st\in W$ is prime
to $p$ for every pair of Coxeter generators $s,t\in S$.
Since symmetric groups $\Sigma_n$ are $p$-free for $p\geq 5$ as Coxeter groups,
Theorem \ref{thm-alt} is a special case of our theorem.
Note that finiteness and $p$-freeness assumptions on $W$ are necessary, 
and vanishing ranges for $H^{k}(A_W,\F_{p})$ are best possible. 
See \S\ref{subsec:alt}, \S\ref{sec:twisted} and
\S\ref{sec:infinite} for precise.

The key ingredients for the proof of Theorem \ref{main-thm} are
\begin{enumerate}
\item the classification of finite Coxeter groups (see \S\ref{sec-casebycase}),
\item high connectivity of the Coxeter complex $X_W$ (Proposition \ref{pro:Coxeter-cpx},
\ref{serre-cont}),
\item high connectivity of the orbit space $X_W/A_W$ (Proposition \ref{orbit-space}),
\end{enumerate}
as well as some considerations of equivaraint cohomology in \S\ref{sec:equiv-coho}.
The proof is inspired by the arguments in
\cite{preprint-coxeter-p-local}, where the first author obtained vanishing ranges
for the $p$-local homology of $p$-free Coxeter groups.
As a byproduct of Theorem \ref{main-thm}, we will obtain
vanishing ranges for the twisted cohomology of finite $p$-free Coxeter groups
with coefficients in the sign representations over $\F_p$
(Theorem \ref{thm:signmod}).
As we remarked above, Theorem \ref{main-thm} no longer holds for
infinite Coxeter groups.
Instead, we will prove a weak version
of Theorem \ref{main-thm} for a certain class of infinite Coxeter groups
(Theorem \ref{thm:infinite}).

\section{Preliminaries}\label{sec-defn}
\subsection{Coxeter groups}\label{subsec-defn}
In this subsection, 
we recall definitions and relevant facts concerning of Coxeter groups.
References are \cites{book-building,bourbaki,humphreys}.
Let $S$ be a finite set.
A Coxeter matrix is a symmetric matrix $M=(m(s,t))_{s,t\in S}$
each of whose entries $m(s,t)$ is a positive integer or $\infty$ such that
\begin{enumerate}
\item $m(s,s)=1$ for all $s\in S$,
\item $2\leq m(s,t)=m(t,s)\leq\infty$ for all distinct $s,t\in S$.
\end{enumerate}
The \emph{Coxeter system} associated to $M$ is the pair $(W,S)$
where $W$ is the group generated by $s\in S$
and the fundamental relations $(st)^{m(s,t)}=1$ $(m(s,t)<\infty)$:
\[
W:=\langle s\in S\ |\ (st)^{m(s,t)}=1 (m(s,t)<\infty)\rangle.
\]
The group $W$ is called the \emph{Coxeter group} associated to $M$,
elements of $S$ are called \emph{Coxeter generators} of $W$,
and the cardinality of $S$ is called the \emph{rank} of $W$
and is denoted by $|S|$ or $\rank\,W$.

For a subset $T\subseteq S$,
the subgroup $W_T:=\langle T\rangle$ of $W$ generated
by elements $t\in T$ is called a (standard) \emph{parabolic subgroup}
(or a special subgroup in the literature).
In particular, $W_S=W$ and $W_\varnothing=\{1\}$.
It is known that $(W_T,T)$ is a Coxeter system
associated to the restriction of the Coxeter matrix to $T$.
Henceforth, we sometimes omit the reference to the Coxeter matrix $M$
and the set of Coxeter generators $S$ 
if there is no ambiguity.

Finally, given an odd prime number $p$,
a Coxeter group $W$ is called \emph{$p$-free} in \cite{preprint-coxeter-p-local}
if $m(s,t)$ is prime to $p$
for every pair of Coxeter generators $s,t\in S$. By convention, $\infty$ is prime to all prime numbers.
Since the order of the product $st$ $(s,t\in S)$ is precisely $m(s,t)$, the definition
agrees with the one given in the introduction.
For every finite irreducible Coxeter group $W$, 
the range of odd prime numbers $p$ such that $W$ is $p$-free can be found in Appendix
(see \S\ref{sec-casebycase} for the definition of irreducible Coxeter groups).

\subsection{Known results for cohomology of alternating subgroups}\label{subsec:alt}
As was defined in the introduction, the \emph{alternating subgroup} $A_W$ of a Coxeter
group of $W$ is the kernel of the sign homomorphism $W\rightarrow\{\pm 1\}$
which assigns $-1$ to $s\in S$.
Alternating subgroups are also called rotation(al) subgroups
in the literature.
To the best of our knowledge, 
not much is known about cohomology of alternating subgroups of Coxeter groups.
Finite presentations of alternating subgroups of arbitrary Coxeter groups were
given in Bourbaki \cite{bourbaki}*{Chapitre IV \S1, Exercise 9} as an
exercise.
The proof can be found in
Brenti-Reiner-Roichman \cite{MR2417024}*{Proposition 2.1.1}.
From finite presentations, one can compute the first integral homology
of alternating subgroups.
Moreover, Maxwell \cite{MR0486097} determined the Schur multiplier
$H^2(A_W,\C^\times)\cong H_2(A_W,\Z)$ for alternating subgroups $A_W$
of {\em finite} Coxeter groups $W$.

Before preceeding further,
we remark on the $p$-freeness assumption in Theorem  \ref{main-thm}.
Let
$
D_{2m}:=\langle s,t\mid s^2=t^2=(st)^m=1\rangle
$
be the dihedral group of order $2m$, which is a Coxeter group of rank $2$.
It is $p$-free if and only if $p$ does not divide $m$.
The alternating subgroup of $D_{2m}$ is the cyclic group of order $m$ generated
by $st$.
Now suppose that $p$ divides $m$. 
It is well-known that $H^*(\Z/m,\F_p)\cong \F_p[u]\otimes E(v)$,
where $\F_p[u]$ is the polynomial algebra 
generated by a two dimensional generator $u$
and $E (v)$ is the exterior algebra generated by a one dimensional
generator $v$.
In particular, $H^1(\Z/m,\F_p)\cong\F_p$,
which shows the necessity of $p$-freeness assumption
in Theorem \ref{main-thm}.

\section{The case $|S|\leq p-2$}\label{sec-casebycase}
In this section, we will prove Theorem \ref{main-thm} for $|S|\leq p-2$
by using the classification of finite Coxeter groups.
Recall that the Coxeter matrix $M=(m(s,t))_{s,t\in S}$ defining the
Coxeter system $(W,S)$ is represented by the 
\emph{Coxeter graph} $\Gamma$ whose
vertex set is $S$ and whose edges are the unordered pairs $\{s,t\}\subset S$
with $m(s,t)\geq 3$. The edges $\{s,t\}$ with $m(s,t)\geq 4$ are labeled
by those numbers.
For convenience, we write $W=W(\Gamma)$
and call it the Coxeter group of type $\Gamma$.
A Coxeter group $W(\Gamma)$ is called \emph{irreducible} if 
$\Gamma$ is connected,
otherwise called \emph{reducible}.
For a reducible Coxeter group $W(\Gamma)$, 
if $\Gamma$ consists of the connected components
$\Gamma_1,\Gamma_2,\dots,\Gamma_r$, then $W(\Gamma)$
is isomorphic to the internal direct product of parabolic subgroups
$W(\Gamma_i)$'s
each of which is irreducible:
\[
W(\Gamma)=W(\Gamma_1)\times W(\Gamma_2)\times\cdots\times W(\Gamma_r).
\]

The classification of Coxeter graphs for finite irreducible Coxeter groups is well-known.
They consist of
infinite families $A_n\, (n\geq 1)$, $B_n\, (n\geq 2)$, $D_n\, (n\geq 4)$,
$I_2(m)\, (m\geq 3)$, and exceptional graphs 
$H_3,H_4,F_4,E_6,E_7$ and $E_8$.
The subscript stands for the rank of the corresponding Coxeter group.
See Appendix for the orders of finite irreducible Coxeter groups.
Note that $W(A_n)$ is isomorphic to the symmetric group of $n+1$ letters,
and $W(I_2(m))$ is isomorphic to the dihedral group of order $2m$.

\begin{pro}\label{low-rank}
Let $p\geq 5$ be a prime and
$W$ a finite $p$-free Coxeter group with $|S|\leq p-2$.
Then $W$ has no $p$-torsion.
\end{pro}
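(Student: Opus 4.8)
The plan is to reduce to the irreducible case and then read off the answer from the classification together with the order data recorded in the Appendix. Since $W$ is finite, Cauchy's theorem identifies the absence of $p$-torsion with the condition $p\nmid|W|$, so it suffices to control the $p$-part of $|W|$.

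First I would reduce to irreducible $W$. If $\Gamma=\Gamma_1\sqcup\cdots\sqcup\Gamma_r$ is the decomposition of the Coxeter graph into connected components, then $W\cong W(\Gamma_1)\times\cdots\times W(\Gamma_r)$, so that $|W|=\prod_i|W(\Gamma_i)|$ and $|S|=\sum_i\rank W(\Gamma_i)$. Each factor $W(\Gamma_i)$ is again $p$-free and satisfies $\rank W(\Gamma_i)\leq|S|\leq p-2$. As $p$ is prime, $p\mid|W|$ would force $p\mid|W(\Gamma_i)|$ for some $i$, so it is enough to treat irreducible $W$.

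For the irreducible types I would split the classification $A_n,B_n,D_n,I_2(m),H_3,H_4,F_4,E_6,E_7,E_8$ into three groups. The infinite families $A_n,B_n,D_n$ have orders $(n+1)!$, $2^nn!$, $2^{n-1}n!$ respectively, where $n=\rank W\leq p-2$; hence the factorial factor is a product of integers $\leq p-1<p$ and is prime to $p$, while the power of $2$ is prime to the odd prime $p$, giving $p\nmid|W|$. For $I_2(m)$, of order $2m$ and rank $2\leq p-2$, the hypothesis of $p$-freeness says precisely that $p\nmid m$, and $p\neq2$, so again $p\nmid|W|$.

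This leaves the six exceptional groups, which I would dispatch using the Appendix. For each of them the rank constraint $\rank W\leq p-2$ forces a lower bound on $p$, $p$-freeness rules out the small primes dividing the edge labels (so $p\geq7$ for $H_3,H_4$ because of the label $5$, and $p\geq5$ otherwise), and comparing this range with the explicit prime factorisation of the order finishes the argument; for instance $W(E_8)$ has rank $8$, forcing $p\geq11$, whereas $|W(E_8)|=2^{14}\cdot3^5\cdot5^2\cdot7$ has no prime factor $\geq11$, and the remaining types are handled identically. The main point needing care is exactly this last step: one must combine the rank bound and the $p$-freeness bound correctly to pin down the admissible range of $p$ for each exceptional type before checking the order, but once that range is identified each verification is immediate.
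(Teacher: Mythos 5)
Your proposal is correct and follows essentially the same route as the paper: reduce to irreducible factors via the classification, kill the infinite families $A_n,B_n,D_n$ by the factorial bound $n+1\leq p-1$ and $I_2(m)$ by $p$-freeness, and check the exceptional types against the order table. The only difference is organizational --- the paper sorts the exceptional cases by the prime ($p\geq 11$ first, then $p=5,7$, where $H_3$ is excluded by $5$-freeness and $F_4,H_3,H_4$ have no $7$-torsion) while you sort by type --- which amounts to the same verification.
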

\begin{proof}
If $W$ is a finite $p$-free Coxeter group, then $W$ decomposes into the direct product of
finite irreducible $p$-free Coxeter groups
$
W\cong W_1\times\cdots\times W_r
$
with $\Sigma_{i=1}^r\rank\,W_i=\rank\,W$.
So it suffices to prove the proposition when $W$ is irreducible.
Since $|W(A_n)|=(n+1)! $, $|W(B_n)|=2^n n!$ and $|W(D_n)|=2^{n-1} n!$,
they have no $p$-torsion when $n\leq p-2$. As for $W(I_2(m))$, it is $p$-free
if and only if it has no $p$-torsion.
These observations imply the proposition for $p\geq 11$,
for all finite irreducible Coxeter groups of type other than $A_n,B_n,D_n$ and $I_2(m)$
have no $p$-torsion for $p\geq 11$ (see Appendix).
Apart from Coxeter groups of type $A_n$, $B_n$, $D_n$ and $I_2(m)$,
finite irreducible Coxeter groups with rank at most $p-2$ are,
$W(H_3)$ for $p=5$, and $W(F_4)$, $W(H_3)$ and $W(H_4)$ for $p=7$.
But $W(H_3)$ is not $5$-free, while $W(F_4)$, $W(H_3)$ and $W(H_4)$ have
no $7$-torsion, proving the proposition.
\end{proof}
As an immediate consequence, we obtain the following corollary
which implies Theorem \ref{main-thm} for $|S|\leq p-2$:
\begin{cor}\label{vanish-p-2}
Let $p\geq 5$ be a prime and
$W$ a finite $p$-free Coxeter group with $|S|\leq p-2$.
Then $H^{k}(W,\F_p)=H^k(A_W,\F_p)=0$ for $k>0$.
\end{cor}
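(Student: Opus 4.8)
The plan is to deduce the corollary directly from Proposition~\ref{low-rank} together with the standard transfer argument for finite groups. By Proposition~\ref{low-rank}, the hypotheses ($p\geq 5$, $W$ finite and $p$-free, $|S|\leq p-2$) guarantee that $W$ has no $p$-torsion, that is, $p$ does not divide the order $|W|$. Since both $W$ and its alternating subgroup $A_W$ are finite groups here, the entire content of the corollary reduces to the elementary fact that a finite group whose order is prime to $p$ has trivial reduced mod~$p$ cohomology.

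To record that fact, I would invoke the composite of restriction to the trivial subgroup followed by corestriction: for any finite group $G$ the composite
\[
H^{k}(G,\F_p)\xrightarrow{\ \res\ }H^{k}(\{1\},\F_p)\xrightarrow{\ \tr\ }H^{k}(G,\F_p)
\]
is multiplication by $|G|$. When $p\nmid|G|$ this multiplication is an isomorphism on $\F_p$-coefficients, while $H^{k}(\{1\},\F_p)=0$ for $k>0$; hence $H^{k}(G,\F_p)=0$ for $k>0$. Taking $G=W$ gives the vanishing of $H^{k}(W,\F_p)$.

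For the alternating subgroup, I would observe that $A_W$ is a subgroup of $W$, so $|A_W|$ divides $|W|$; consequently $p\nmid|A_W|$ as well. The same transfer argument applied with $G=A_W$ then yields $H^{k}(A_W,\F_p)=0$ for $k>0$, completing the proof. (The index $[W:A_W]$ is $1$ or $2$, so since $p$ is odd one also sees directly that passing to $A_W$ cannot introduce $p$-torsion.)

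There is essentially no obstacle remaining at this stage: all of the real work has already been carried out in Proposition~\ref{low-rank}, whose proof invokes the classification of finite irreducible Coxeter groups to rule out $p$-torsion in the range $|S|\leq p-2$. The corollary is a purely formal consequence of that proposition, and the only point to verify is the passage from $W$ to the subgroup $A_W$, which is immediate because the order of a subgroup divides the order of the ambient group.
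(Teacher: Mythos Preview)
Your proposal is correct and matches the paper's approach. The paper does not spell out a proof but simply introduces the corollary as ``an immediate consequence'' of Proposition~\ref{low-rank}; the standard transfer argument you give (together with the observation that $|A_W|$ divides $|W|$) is exactly the intended justification.
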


\section{Equivariant cohomology}\label{sec:equiv-coho}
Let $G$ be a group.
By a $G$-complex we mean a CW-complex $X$ together with a continuous action of
$G$ on $X$ which permutes the cells.
A $G$-complex $X$ is called admissible in \cite{brown} if,
for each cell $\sigma$ of $X$,
the isotropy subgroup $G_{\sigma}$ of $\sigma$ fixes $\sigma$ pointwise.
Throughout this section, $X$ is a finite dimensional, connected, admissible
$G$-complex, and $A$ is a trivial $G$-module
(an abelian group equipped with the trivial $G$-action).
We consider the equivariant cohomology $H^*_G(X,A)$
(see \cite[Chapter VII]{brown} for the definition and relevant facts).
\begin{lem}\label{lem:Serre-ss}
If $H^k(X,A)=0$ for $0<k<d$ then
$H^k_G(X,A)\cong H^k(G,A)$ for $0\leq k<d$.
\end{lem}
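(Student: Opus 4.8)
The plan is to feed the hypothesis into the first-quadrant spectral sequence of equivariant cohomology — the Serre spectral sequence of the Borel fibration $X\to EG\times_G X\to BG$ described in \cite[Chapter VII]{brown} —
\[
E_2^{p,q}=H^p(G,H^q(X,A))\Longrightarrow H^{p+q}_G(X,A),
\]
whose $E_2$-term is built from group cohomology of $G$ with coefficients in the $G$-modules $H^q(X,A)$. First I would identify the bottom row. Since $X$ is connected, $H^0(X,A)\cong A$, and the induced $G$-action on $H^0(X,A)$ coincides with the $G$-action on $A$, which is trivial by assumption; hence $E_2^{p,0}=H^p(G,A)$ with trivial coefficients. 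The hypothesis $H^q(X,A)=0$ for $0<q<d$ then kills the entire strip, giving $E_2^{p,q}=0$ for all $p$ whenever $0<q<d$.

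Next comes the degree bookkeeping. In total degree $k<d$ the only $E_2$-entry that can survive is $E_2^{k,0}=H^k(G,A)$: every entry with $0<q<d$ has just been shown to vanish, and every entry with $q\ge d$ already has $p+q\ge d$. It remains to check that no differential disturbs $E_r^{k,0}$ in this range. The outgoing differentials $d_r\colon E_r^{p,0}\to E_r^{p+r,1-r}$ vanish for all $r\ge 2$ because their target sits in negative fibre degree $1-r$. For the incoming differentials $d_r\colon E_r^{k-r,r-1}\to E_r^{k,0}$, the source has fibre degree $r-1$: if $2\le r\le d$ then $0<r-1<d$, so the source already vanishes on $E_2$; if $r\ge d+1$ then its base degree satisfies $k-r<d-(d+1)<0$, so it vanishes for that reason. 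Therefore $E_\infty^{k,0}=E_2^{k,0}=H^k(G,A)$.

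Finally, for $k<d$ the filtration on $H^k_G(X,A)$ has a single nonzero subquotient, namely $E_\infty^{k,0}$, whence $H^k_G(X,A)\cong H^k(G,A)$; tracing through the edge homomorphism shows the isomorphism is the one induced by the projection $EG\times_G X\to BG$ (equivalently, by collapsing $X$ to a point). The case $k=0$ is immediate from connectivity, since $H^0_G(X,A)\cong A\cong H^0(G,A)$. I do not expect a genuine obstacle here: the existence and convergence of the spectral sequence is standard, and once the $E_2$-page is correctly identified the collapse in the stated range is forced purely by degree considerations. The only points demanding care are the verification that $G$ acts trivially on $H^0(X,A)$ (where both the connectivity of $X$ and the triviality of $A$ are used) and the precise tracking of fibre versus base degree in the differential analysis above.
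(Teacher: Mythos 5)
Your proof is correct and takes essentially the same approach as the paper: both arguments run the spectral sequence $E_2^{i,j}=H^i(G,H^j(X,A))\Rightarrow H^{i+j}_G(X,A)$ of \cite[\S VII.7]{brown}, identify the bottom row as $H^*(G,A)$ (using connectivity of $X$ and triviality of the $G$-module $A$), and note that the hypothesis annihilates the rows $0<j<d$. The only difference is cosmetic: you make explicit the differential bookkeeping and the edge-homomorphism identification that the paper's one-line conclusion leaves implicit.
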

\begin{proof}
Consider the spectral sequence
\[
E_2^{ij}=H^i(G,H^j(X,A))\Rightarrow H^{i+j}_G(X,A)
\]
(see \cite[\S VII.7]{brown}).
We have $E_2^{*,0}=H^*(G,A)$ and $E_2^{*,j}=0$ for $0<j<d$.
This proves $H^k_G(X,A)\cong H^k(G,A)$ for $0\leq k<d$.
\end{proof}
Next, consider the spectral sequence
\begin{equation}\label{eq:Leray-ss}
E_1^{ij}=H^j(G,C^i(X,A))
\Rightarrow H^{i+j}_G(X,A).
\end{equation}
By Shapiro's lemma,
\begin{equation}\label{eq:E_1-Shapiro}
E_1^{ij}\cong\prod_{\sigma\in\mathcal{E}_i} H^j(G_\sigma,A)
\end{equation}
where $\mathcal{E}_i$ is a set of representatives for $G$-orbits of $i$-cells of $X$
(see \cite[\S VII.7]{brown}).
Note that  $A$ in $H^j(G_\sigma,A)$ is the trivial $G_\sigma$-module since $X$ is admissible.
\begin{lem}\label{lem:coho-orbit}
In the spectral sequence (\ref{eq:Leray-ss}), 
we have $E_2^{*,0}\cong H^*(X/G,A)$.
\end{lem}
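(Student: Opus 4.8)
The plan is to analyze the bottom row $j=0$ of the spectral sequence (\ref{eq:Leray-ss}) and to identify the resulting cochain complex with the cellular cochain complex of the orbit space $X/G$. First I would evaluate $E_1^{i,0}$. By definition $E_1^{i,0}=H^0(G,C^i(X,A))=C^i(X,A)^G$, the submodule of $G$-invariant cellular $i$-cochains. Equivalently, specializing the Shapiro identification (\ref{eq:E_1-Shapiro}) to $j=0$ and using that $A$ is a trivial $G_\sigma$-module (which is where admissibility enters), one has $H^0(G_\sigma,A)=A^{G_\sigma}=A$, so that $E_1^{i,0}\cong\prod_{\sigma\in\mathcal{E}_i}A$.

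Next I would identify this group with $C^i(X/G,A)$. Because $X$ is admissible, each isotropy group $G_\sigma$ fixes its cell $\sigma$ pointwise, so no cell is carried to itself with reversed orientation; consequently $X/G$ is again a CW-complex whose $i$-cells are in natural bijection with the set $\mathcal{E}_i$ of $G$-orbits of $i$-cells of $X$, and the cellular chain complex satisfies $C_*(X/G)\cong\Z\otimes_{\Z G}C_*(X)=C_*(X)_G$. Dualizing, $C^i(X/G,A)=\Hom(C_i(X/G),A)\cong\Hom_G(C_i(X),A)=C^i(X,A)^G\cong\prod_{\sigma\in\mathcal{E}_i}A$, matching $E_1^{i,0}$ term by term.

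It remains to check that the differential $d_1\colon E_1^{i,0}\to E_1^{i+1,0}$ coincides, under this identification, with the cellular coboundary of $X/G$. The $d_1$ on the bottom row is induced by the $G$-equivariant cellular coboundary $C^i(X,A)\to C^{i+1}(X,A)$ of $X$, which carries $G$-invariant cochains to $G$-invariant cochains and thus restricts to a differential on $C^*(X,A)^G$; under the isomorphism $C^*(X,A)^G\cong C^*(X/G,A)$ this restricted map is exactly the coboundary of the quotient complex, by naturality of cellular (co)chains with respect to the projection $X\to X/G$. Taking cohomology of the bottom row then yields $E_2^{i,0}\cong H^i(C^*(X/G,A))=H^i(X/G,A)$, as claimed. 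The main obstacle is the careful bookkeeping in this last step: verifying that the spectral-sequence differential $d_1$ really is the restriction of the cellular coboundary, and that the identification of invariant cochains with orbit cochains is compatible with coboundaries (including the orientation signs), all of which hinge on admissibility.
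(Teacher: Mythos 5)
Your proposal is correct and follows essentially the same route as the paper's own proof: you compute $E_1^{i,0}=H^0(G,C^i(X,A))\cong C^i(X,A)^G\cong C^i(X/G,A)$, using admissibility for the second isomorphism, and then check that $d_1$ corresponds to the cellular coboundary of $X/G$. Your version simply spells out in more detail (the bijection $\mathcal{E}_i\leftrightarrow$ cells of $X/G$, the dualization $\Hom_G(C_i(X),A)\cong\Hom(C_i(X/G),A)$, and the compatibility of differentials) what the paper states more briefly.
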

\begin{proof}
Recall that the differential
$d_1^{ij}:E_1^{ij}\rightarrow E_1^{i+1,j}$ of the spectral sequence is
the map $H^j(G,C^i(X,A))\rightarrow H^j(G,C^{i+1}(X,A))$ induced
by the coboundary operator of $C^*(X,A)$.
Now there are isomorphisms
\[
H^0(G,C^i(X,A))\cong C^i(X,A)^G\cong C^i(X/G,A),
\]
where the second isomorphism holds because $X$ is admissible.
These isomorphisms are compatible with differentials of the spectral sequence
and coboundary operators of $C^*(X,A)$ and $C^*(X/G,A)$,
the lemma follows.
\end{proof}
\begin{lem}\label{lem:equivariant-vanish}
If $X$ satisfies the following conditions:
\begin{enumerate}
\item For each cell $\sigma$ of $X$, $H^k(G_\sigma,A)=0$ for $0<k<d$,
\item $H^k(X/G,A)=0$ for $0<k<d$.
\end{enumerate}
Then $H^k_G(X,A)=0$ for $0<k<d$.
\end{lem}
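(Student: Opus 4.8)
The plan is to run the Leray-type spectral sequence (\ref{eq:Leray-ss}),
\[
E_1^{ij}=H^j(G,C^i(X,A))\Rightarrow H^{i+j}_G(X,A),
\]
rather than the Serre-type sequence of Lemma \ref{lem:Serre-ss}, since its $E_1$-page sees the stabilizer cohomology directly through the Shapiro isomorphism (\ref{eq:E_1-Shapiro}) while its bottom row records the cohomology of the orbit space via Lemma \ref{lem:coho-orbit}. The two hypotheses of the lemma are tailored precisely to these two features. Because $X$ is finite dimensional, only finitely many columns $i$ are nonzero, so the spectral sequence is bounded and converges, and the associated graded of $H^k_G(X,A)$ is $\bigoplus_{i+j=k}E_\infty^{ij}$.

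First I would invoke hypothesis (1). By the Shapiro isomorphism (\ref{eq:E_1-Shapiro}), $E_1^{ij}\cong\prod_{\sigma\in\mathcal{E}_i}H^j(G_\sigma,A)$, and hypothesis (1) says each factor $H^j(G_\sigma,A)$ vanishes for $0<j<d$. Hence the entire horizontal strip $0<j<d$ of the $E_1$-page is zero, and therefore so is the corresponding strip on every later page. Now fix a total degree $k$ with $0<k<d$ and examine the terms $E_\infty^{ij}$ with $i+j=k$. If $0<j<d$ the term already vanishes on $E_1$; if $j\geq d$ then $k=i+j\geq d$, contradicting $k<d$, so no such term occurs. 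The only surviving possibility is the bottom-row term $E_\infty^{k,0}$.

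It remains to kill $E_\infty^{k,0}$, and this is where hypothesis (2) enters. By Lemma \ref{lem:coho-orbit} we have $E_2^{k,0}\cong H^k(X/G,A)$, which vanishes for $0<k<d$ by hypothesis (2). Since $E_\infty^{k,0}$ is a subquotient of $E_2^{k,0}$, it vanishes as well. Thus every $E_\infty^{ij}$ with $i+j=k$ is zero, and convergence gives $H^k_G(X,A)=0$ for $0<k<d$. The argument is essentially formal once the two spectral-sequence inputs are in place; the only point requiring genuine care is the bookkeeping that no $E_\infty$-term in total degree below $d$ survives, which reduces to the degree inequality ruling out the rows $j\geq d$ together with the subquotient control of the bottom row by the orbit-space cohomology.
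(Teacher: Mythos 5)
Your proposal is correct and is essentially the paper's own proof: the paper likewise runs the spectral sequence (\ref{eq:Leray-ss}), kills the rows $0<j<d$ via the Shapiro isomorphism (\ref{eq:E_1-Shapiro}) and hypothesis (1), and kills the bottom row via Lemma \ref{lem:coho-orbit} and hypothesis (2). The only difference is that you spell out the convergence bookkeeping (ruling out rows $j\geq d$ in total degree $k<d$ and using finite dimensionality), which the paper leaves implicit.
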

\begin{proof}
Consider the spectral sequence (\ref{eq:Leray-ss}).
We have $E_1^{*,j}=0$ for $0<j<d$ by the isomorphism (\ref{eq:E_1-Shapiro}),
and $E_2^{i,0}\cong H^i(X/G,A)=0$ for $0<i<d$
by Lemma \ref{lem:coho-orbit}, proving the lemma.
\end{proof}
Combining Lemma \ref{lem:Serre-ss} and \ref{lem:equivariant-vanish},
we obtain the following proposition which will be used to prove Theorem
\ref{main-thm}:
\begin{pro}\label{pro:equivariant}
Let $G$ be a group and $A$ a trivial $G$-module.
If there exists  a finite dimensional, admissible, connected
$G$-complex $X$ satisfying the
following conditions:
\begin{enumerate}
\item For each cell $\sigma$ of $X$, $H^k(G_\sigma,A)=0$ for $0<k<d$,
\item $H^k(X,A)=0$ and $H^k(X/G,A)=0$ for $0<k<d$.
\end{enumerate}
Then $H^k(G,A)=0$ for $0<k<d$.
\end{pro}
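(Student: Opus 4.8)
The plan is to feed the hypotheses of the proposition into the two lemmas already established and then compare their outputs, both of which describe the single equivariant cohomology group $H^k_G(X,A)$.

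First I would invoke Lemma \ref{lem:Serre-ss}. Condition (2) of the proposition supplies exactly the hypothesis $H^k(X,A)=0$ for $0<k<d$, so the lemma yields an isomorphism $H^k_G(X,A)\cong H^k(G,A)$ throughout the range $0\leq k<d$. This is the step that links the group cohomology of $G$ to the equivariant cohomology of $X$, via the Serre-type spectral sequence $E_2^{ij}=H^i(G,H^j(X,A))$. Next I would apply Lemma \ref{lem:equivariant-vanish}, whose two hypotheses are precisely condition (1) of the proposition (the vanishing $H^k(G_\sigma,A)=0$ for each cell $\sigma$ and $0<k<d$) together with the remaining half of condition (2) (the vanishing $H^k(X/G,A)=0$ for $0<k<d$). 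That lemma then gives $H^k_G(X,A)=0$ for $0<k<d$; here the relevant tool is the isotropy spectral sequence (\ref{eq:Leray-ss}), whose $E_1$-page is computed by Shapiro's lemma and whose zeroth row is identified with $H^*(X/G,A)$ by Lemma \ref{lem:coho-orbit}.

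Finally I would combine the two conclusions: for $0<k<d$ the isomorphism of the first step reads $H^k(G,A)\cong H^k_G(X,A)$, while the second step shows that the right-hand side vanishes, so $H^k(G,A)=0$ in this range, as desired. There is essentially no obstacle here beyond bookkeeping, since both spectral sequences converge to the same graded object $H^*_G(X,A)$. The only point warranting a moment's care is matching the index ranges: the isomorphism of Lemma \ref{lem:Serre-ss} holds for $0\leq k<d$, which comfortably contains the range $0<k<d$ in which Lemma \ref{lem:equivariant-vanish} provides the vanishing, so the two results are simultaneously available at every degree we need.
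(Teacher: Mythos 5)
Your proposal is correct and matches the paper exactly: the paper also obtains Proposition \ref{pro:equivariant} by combining Lemma \ref{lem:Serre-ss} (giving $H^k_G(X,A)\cong H^k(G,A)$ for $0\leq k<d$ from the vanishing of $H^k(X,A)$) with Lemma \ref{lem:equivariant-vanish} (giving $H^k_G(X,A)=0$ for $0<k<d$ from conditions (1) and the vanishing of $H^k(X/G,A)$). Your attention to the index ranges is sound, and nothing further is needed.
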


\section{Coxeter complexes and the proof of the main theorem}\label{complex}
Now we prove Theorem \ref{main-thm}.
To do so, first
we recall the definition and properties of Coxeter complexes
which are relevant to prove Theorem \ref{main-thm}.
A reference for Coxeter complexes is \cite{book-building}*{Chapter 3}.
Given a Coxeter group $W$, the \emph{Coxeter complex} 
$X_W$ of $W$ is the poset of
cosets $wW_T$ $(w\in W, T\subsetneq S)$,
ordered by reverse inclusion. 
It is known that $X_W$ is indeed an $(|S|-1)$-dimensional
simplicial complex (see \cite[Theorem 3.5]{book-building}). 
The $k$-simplices of $X_W$ are the cosets $wW_T$ with $k=|S|-|T|-1$.
A coset $wW_T$ is a face of $w'W_{T'}$ if and only if $wW_T\supseteq w'W_{T'}$.
In particular, the vertices are cosets of the form $wW_{S\setminus\{s\}}$
$(s\in S,w\in W)$, 
while the maximal simplices are the singletons $wW_\varnothing=\{w\}$ $(w\in W)$.
The maximal simplex $W_\varnothing=\{1\}$ is called the \emph{fundamental chamber}.
In what follows, we will not distinguish between $X_W$ and its
geometric realization.
The following fact is well-known (see \cite[Proposition 1.108]{book-building}).
\begin{pro}\label{pro:Coxeter-cpx}
If $W$ is a finite Coxeter group, then $X_W$ is a triangulation of the
$(|S|-1)$-dimensional sphere $\sph^{|S|-1}$.
\end{pro}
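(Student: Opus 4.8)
The plan is to realize $X_W$ geometrically via the canonical reflection representation of $W$ and to recognize the resulting subdivision of the sphere as the Coxeter complex.

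First I would invoke the canonical representation of $W$ on $V=\R^{|S|}$, in which each Coxeter generator $s\in S$ acts as the orthogonal reflection in a hyperplane $H_s$ with respect to the canonical bilinear form $B$ associated to the Coxeter matrix $M$. The crucial input is the classical fact that $W$ is finite if and only if $B$ is positive definite; hence $B$ is an inner product, $W$ acts on $V$ by isometries, and the representation is faithful and essential (the simple roots $\alpha_s$ are linearly independent and span $V$, so no nonzero vector is fixed by all of $W$). Next I would describe the chamber geometry. The reflecting hyperplanes $H_\alpha$, one for each root $\alpha$ of the finite root system of $W$, decompose $V$ into open \emph{chambers}; the fundamental chamber $C$, cut out by the walls $H_s$ $(s\in S)$, is a simplicial cone with exactly $|S|$ walls, precisely because the $\alpha_s$ are linearly independent. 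Standard reflection-group theory then gives that $\overline{C}$ is a strict fundamental domain and that $W$ permutes the chambers \emph{simply transitively}. Since $W$ is finite, the Tits cone equals all of $V$, so every point of $V\setminus\{0\}$ lies in some $W$-translate of $\overline{C}$.

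I would then intersect this arrangement with the unit sphere $\sph^{|S|-1}\subset V$. Each closed chamber meets the sphere in a closed geodesic simplex with $|S|$ vertices, and these spherical simplices, together with their faces, cover $\sph^{|S|-1}$ and meet only along common faces; this yields a geodesic triangulation of $\sph^{|S|-1}$. Finally I would identify the face poset of this triangulation with $X_W$. For $T\subsetneq S$, let $C_T=\overline{C}\cap\bigcap_{s\in T}H_s$; one checks that the isotropy group of a (relatively interior) point of $C_T$ is exactly the parabolic subgroup $W_T$, so by simple transitivity on chambers the assignment $wW_T\mapsto w\cdot C_T$ is a well-defined bijection from the cosets onto the faces of the triangulation. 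Moreover it matches the incidence relations: $wW_T\supseteq w'W_{T'}$ holds precisely when $w\cdot C_T$ is a face of $w'\cdot C_{T'}$ (larger $T$ meaning more walls, hence a lower-dimensional face and a larger coset). Thus $X_W$, the poset of cosets under reverse inclusion, is isomorphic to the face poset of a triangulated $(|S|-1)$-sphere, proving the proposition.

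The main obstacle will be the fundamental-domain package: verifying that $W$ acts simply transitively on chambers, that $\overline{C}$ is a strict fundamental domain, and that the stabilizer of a point of $C_T$ is \emph{exactly} $W_T$ and no larger (so that distinct cosets yield distinct faces). These are the structural heart of finite-reflection-group theory; once they are granted, the intersection with the sphere and the poset identification are routine.
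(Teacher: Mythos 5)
Your proposal is correct, and it is essentially the argument behind the paper's treatment: the paper does not prove this proposition itself but cites it as well-known from Abramenko--Brown (\emph{Buildings}, Proposition 1.108), and your reconstruction via the canonical representation --- positive definiteness of $B$ for finite $W$, simple transitivity on chambers with $\overline{C}$ a strict fundamental domain, stabilizers of points of $C_T$ equal to $W_T$, and the induced identification of the coset poset with the face poset of the spherical subdivision --- is precisely the standard proof given in that reference. The three facts you flag as the ``main obstacle'' are indeed the structural core, and all are classical (e.g.\ Humphreys, \S 1.12--1.15), so there is no gap.
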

In case $W$ is infinite, Serre proved the following result:
\begin{pro}[{\cite[Lemma 4]{serre}}]\label{serre-cont}
If $W$ is an infinite Coxeter group, 
then $X_W$ is contractible.
\end{pro}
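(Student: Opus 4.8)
The plan is to build $X_W$ by attaching its maximal simplices (chambers) one at a time, in order of increasing gallery distance from the fundamental chamber, and to show that each attachment is a homotopy equivalence. Recall that the chambers of $X_W$ are the singletons $\{w\}$ $(w\in W)$, each a geometric simplex of dimension $|S|-1$ whose codimension-one faces (panels) are the cosets $wW_{\{s\}}=\{w,ws\}$ $(s\in S)$; the panel of type $s$ is shared by the chambers $w$ and $ws$. I would fix a total order $w_0=1,w_1,w_2,\dots$ on $W$ refining the length function $\ell$, set $X_n$ to be the subcomplex spanned by the chambers $w_0,\dots,w_n$, and observe that $X_0$ is a single simplex (hence contractible) while $X_W=\bigcup_n X_n$ is the increasing union of these finite subcomplexes.

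First I would identify the subcomplex along which the chamber $w:=w_n$ is glued, namely $L_n:=X_{n-1}\cap\{w\}$. Using the standard combinatorics of minimal coset representatives, the face $wW_T$ $(T\subsetneq S)$ already lies in $X_{n-1}$ precisely when some earlier chamber contains it, which happens if and only if $w$ is not the minimal-length representative of the coset $wW_T$, i.e.\ if and only if $T$ meets the right descent set $D_R(w)=\{s\in S:\ell(ws)<\ell(w)\}$. Since the face $wW_T$ corresponds to the vertex set $S\setminus T$ of the simplex $\{w\}$, this shows that $L_n$ is exactly the union of the facets opposite the vertices $s\in D_R(w)$.

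The decisive step is that $L_n$ is contractible whenever $w\ne 1$. Here the hypothesis that $W$ is infinite enters: $W$ has no longest element, so $D_R(w)\subsetneq S$ for every $w$, and one may choose a vertex $t\in S\setminus D_R(w)$. This $t$ lies in every facet opposite a vertex $s\in D_R(w)$ (as $s\ne t$), so all the facets making up $L_n$ share the vertex $t$, and coning to $t$ contracts $L_n$. By contrast, for finite $W$ the longest element $w_0$ has $D_R(w_0)=S$, so $L_n$ would be the whole boundary sphere $\sph^{|S|-2}$ and the final attachment would cap off a sphere; this is precisely the mechanism behind Proposition \ref{pro:Coxeter-cpx} and explains why finiteness must fail here.

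To conclude, each inclusion $X_{n-1}\hookrightarrow X_n$ is obtained by pushing out the inclusion $L_n\hookrightarrow\{w\}$ of a contractible subcomplex into a simplex along the cofibration $L_n\hookrightarrow X_{n-1}$. Since $L_n\hookrightarrow\{w\}$ is a cofibration and a homotopy equivalence of contractible spaces, the gluing lemma for homotopy pushouts makes $X_{n-1}\hookrightarrow X_n$ a homotopy equivalence. Hence every $X_n$ is contractible, and as $X_W$ is the colimit of the resulting tower of cofibrations each of which is a homotopy equivalence, the inclusion $X_0\hookrightarrow X_W$ is itself a homotopy equivalence; thus $X_W$ is contractible. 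I expect the main obstacle to be the combinatorial identification of $L_n$ with the union of descent facets together with the verification of its contractibility, both of which rest on the elementary Coxeter-theoretic facts about descent sets and minimal-length coset representatives.
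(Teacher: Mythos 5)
The paper offers no proof of this proposition at all: it is quoted verbatim from Serre's \emph{Cohomologie des groupes discrets} (Lemma 4), so there is no internal argument to compare against. Your proof is correct, and it is in substance the classical argument (essentially Serre's original one, later systematized in Bj\"orner's shellability of Coxeter complexes): order the chambers compatibly with length and attach each new chamber along a cone. Your combinatorial identification of $L_n$ is right: $w_nW_T$ lies in $X_{n-1}$ iff $w_n$ is not the minimal-length representative of that coset, iff $T\cap D_R(w_n)\neq\varnothing$; since the minimal representative is strictly shorter (hence earlier) and every other element of the coset is strictly longer (hence later), arbitrary tie-breaking within a length level causes no trouble, and $L_n$ is exactly the union of the panels $w_nW_{\{s\}}$, $s\in D_R(w_n)$, i.e.\ the facets opposite the descent vertices; these all contain the vertex of any type $t\notin D_R(w_n)$, so $L_n$ is star-shaped from that vertex and contractible. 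Three points should be made explicit rather than asserted. First, the crucial input ``$W$ infinite $\Rightarrow D_R(w)\subsetneq S$ for every $w$'' is the standard characterization that an element with full right descent set is a longest element, which forces $W$ to be finite (Bj\"orner--Brenti, Prop.\ 2.3.1, or Humphreys \S 1.8); this is the only place infiniteness enters and deserves a citation or a short proof. Second, an enumeration $w_0,w_1,\dots$ of order type $\omega$ refining length exists because $S$ is finite, so each length level of $W$ is finite and $W$ is countable. Third, $L_n\neq\varnothing$ for $n\geq 1$ (since $D_R(w)\neq\varnothing$ for $w\neq 1$), which is what keeps each $X_n$ connected and lets the gluing lemma upgrade the contractible pair $(\overline{\{w_n\}},L_n)$ to a deformation retraction of $X_n$ onto $X_{n-1}$; the passage to the colimit is then the standard telescope/Whitehead argument for a CW tower of cofibrations, as you indicate. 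With these small additions your write-up is a complete and self-contained proof, and your closing remark correctly isolates why the finite case instead caps off the sphere of Proposition \ref{pro:Coxeter-cpx}.
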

There is a simplicial action of $W$ on $X_W$ by left translation 
$w'\cdot wW_T:=w'wW_T$. The isotropy subgroup of a simplex $wW_T$ is
precisely $wW_T w^{-1}$ which
fixes $wW_T$ pointwise.
Hence $X_W$ is an admissible $W$-complex.
Now let $\Delta_W=\{W_T\ |\ T\subsetneq S\}$ be the subcomplex of $X_W$,
which consists of the fundamental chamber $W_\varnothing$ and its faces.
Then $\Delta_W$ is a set of representatives for the $W$-orbits of simplices,
and hence $\Delta_W$ is a strict fundamental domain for the action of $W$
(see \cite{book-building}*{Lemma 3.75}).

\begin{pro}\label{orbit-space}
For any Coxeter group $W$ not necessarily finite, 
the orbit space $X_W/A_W$ is homeomorphic
to the $(|S|-1)$-dimensional sphere $\sph^{|S|-1}$.
\end{pro}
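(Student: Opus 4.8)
The plan is to study the map $\pi\colon X_W/A_W\to X_W/W$ induced by the inclusion $A_W\hookrightarrow W$ and to exploit that $\Delta_W$ is a strict fundamental domain for $W$, so that $X_W/W$ is homeomorphic to the $(|S|-1)$-simplex $\Delta_W$, i.e.\ an $(|S|-1)$-ball. The goal is to show that $X_W/A_W$ is the union of two copies of this simplex glued along their common boundary, that is, the double of an $(|S|-1)$-ball, which is $\sph^{|S|-1}$.

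First I would count the $A_W$-orbits of simplices. The simplices of a fixed cotype are the cosets $wW_T$ with $T\subsetneq S$ fixed, and the set of $A_W$-orbits among them is the set of double cosets $A_W\backslash W/W_T$. Since $A_W$ is normal of index $2$, I may identify $A_W\backslash W$ with $\{\pm1\}$ via the sign homomorphism, under which the residual right $W_T$-action is through $\sign|_{W_T}$. Hence the number of double cosets is $1$ when $W_T\not\subseteq A_W$ and $2$ when $W_T\subseteq A_W$. As every Coxeter generator has sign $-1$, one has $W_T\not\subseteq A_W$ precisely when $T\neq\varnothing$. Therefore there is exactly one $A_W$-orbit of simplices for each proper cotype (each $T\neq\varnothing$), while the chambers (the case $T=\varnothing$) split into exactly two $A_W$-orbits, namely the even- and odd-length cosets $A_W$ and $A_W s_0$ for a chosen $s_0\in S$.

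Second, I would convert this count into a description of $Y:=X_W/A_W$. Because $\Delta_W$ is a strict fundamental domain, the composite $\Delta_W\hookrightarrow X_W\to Y$ is injective, hence a homeomorphism onto the closure $\overline{C_+}$ of one top cell; the same holds for the adjacent chamber $s_0\Delta_W$, giving $\overline{C_-}$. These are two $(|S|-1)$-balls, one for each chamber orbit. The orbit count shows that each proper face of $C_+$ and of $C_-$ lies in the unique $A_W$-orbit of its cotype, so the proper faces of the two top cells are identified in $Y$; consequently $\overline{C_+}\cap\overline{C_-}=\partial\overline{C_+}=\partial\overline{C_-}$ consists of a single simplex of each proper cotype, i.e.\ a copy of $\partial\Delta_W\cong\sph^{|S|-2}$. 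Equivalently, $\pi$ is a homeomorphism over $\partial\Delta_W$ and a trivial two-sheeted covering over the open top cell. Gluing two $(|S|-1)$-balls along their common boundary sphere yields $\sph^{|S|-1}$, which is the assertion.

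The main obstacle I expect is the orbit-counting step together with its topological interpretation: one must confirm that crossing any panel reverses the parity of the length (so the two chamber orbits really correspond to the two sheets) and that there are no extra identifications among lower-dimensional faces, which is exactly what the ``one orbit per proper cotype'' computation rules out. The very low rank cases $|S|\le 1$ should be checked directly, where $\sph^{0}$ and the convention $\sph^{-1}=\varnothing$ make the statement degenerate but consistent.
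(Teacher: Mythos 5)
Your proof is correct and takes essentially the same route as the paper: both arguments show that the chambers split into two $A_W$-orbits (trivial isotropy of $W_\varnothing$) while each proper cotype $T\neq\varnothing$ gives a single orbit (the paper exhibits $ts_0\in A_W$ with $ts_0\cdot s_0W_T=W_T$, which is exactly your observation that $\sign|_{W_T}$ is nontrivial), and both then identify $X_W/A_W$ with the double of $\Delta_W$ along its boundary, hence with $\sph^{|S|-1}$. Your double-coset count $A_W\backslash W/W_T$ and the explicit ball-gluing justification are merely a more formal packaging of the paper's argument.
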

\begin{proof}
Since $X_W$ is an admissible $A_W$-complex, the orbit space
$X_W/A_W$ inherits a CW-structure whose cells correspond bijectively to
$A_W$-orbits of simplices of $X_W$.
Pick $s_0\in S$ arbitrary. 
As $W=A_W\sqcup s_0A_W$ and $\Delta_W$ is a set of representatives
for the $W$-orbits of simplices, each $A_W$-orbit of simplices is represented
by either $W_T$ or $s_0W_T$ $(T\subsetneq S)$.
The isotropy subgroup of the fundamental chamber $W_\varnothing$ is trivial,
which implies $W_\varnothing$ and $s_0W_\varnothing$ represent distinct $A_W$-orbits.
On the other hand, for $\varnothing\not=T\subsetneq S$ and $t\in T$,
we have $ts_0\in A_W$ and $ts_0\cdot s_0 W_T=W_T$, which implies
$W_T$ and $s_0W_T$ represent the same $A_W$-orbit.
As a result, $X_W/A_W$ can be identified with the cell complex obtained from
$\Delta_W\sqcup s_0\Delta_W$ by identifying faces $W_T$ and $s_0W_T$
$(T\not\not=\varnothing)$.
We conclude that $X_W/A_W$ is homeomorphic to $\sph^{|S|-1}$.
\end{proof}

Now we prove Theorem \ref{main-thm} by induction on $|S|$.
We may assume  $|S|> p-2$ by Corollary \ref{vanish-p-2}.
Consider the action of $A_W$ on the Coxeter complex $X_W$.
We have $H^k(X_W,\F_p)=H^k(X_W/A_W,\F_p)=0$ for
$0<k<|S|-1$ by Proposition \ref{pro:Coxeter-cpx} and \ref{orbit-space},
and hence we have
\begin{equation}\label{eq:cond1}
H^k(X_W,\F_p)=H^k(X_W/A_W,\F_p)=0\ (0<k<p-2).
\end{equation}
Moreover, for each simplex $\sigma:=wW_T$ of $X_W$,
the isotropy subgroup $(A_W)_\sigma$ of $\sigma$ satisfies
\[
(A_W)_\sigma=wW_Tw^{-1}\cap A_W=w(W_T\cap A_W)w^{-1}=wA_{W_T}w^{-1}\cong A_{W_T}
\]
because $A_W$ is a normal subgroup of $W$.
Since $A_{W_T}$ is the alternating subgroup of $W_T$ with $|T|<|S|$,
we see that
\begin{equation}\label{eq:cond2}
H^k((A_W)_\sigma,\F_p)\cong H^k(A_{W_T},\F_p)=0\ (0<k<p-2)
\end{equation}
by the induction assumption.
Applying (\ref{eq:cond1}) and (\ref{eq:cond2}) to Proposition \ref{pro:equivariant},
Theorem \ref{main-thm} follows.

\section{Twisted cohomology of finite Coxeter groups}\label{sec:twisted}
Given a Coxeter system $(W,S)$, let $\signmod$ be 
the sign representation of $W$ over $\F_p$.
Namely, $\signmod =\F_p$ as an abelian group, and each $s\in S$ acts on
$\signmod$ as the multiplication by $-1$.
As an application of Theorem \ref{main-thm}, we will deduce the vanishing range for
$H^k(W,\signmod)$ as follows:
\begin{thm}\label{thm:signmod}
Let $W$ be a finite $p$-free Coxeter group.
Then $H^k(W,\signmod)=0$ for $k<p-2$.
\end{thm}
\begin{proof}
Observe that $H^0(W,\signmod)\cong \signmod^W=0$.
Let $M:=\mathrm{Ind}^W_{A_W}(\F_p)$ be the induced module
of the trivial $A_W$-module $\F_p$.
Then
$
H^k(W,M)\cong H^k(A_W,\F_p)
$
by Shapiro's lemma.
On the other hand, since $p$ is odd, 
$M$ decomposes into $M\cong\F_p\oplus\signmod$,
which implies
$H^k(W,M)\cong H^k(W,\F_p)\oplus H^k(W,\signmod)$
and hence
\begin{equation}\label{sign-split}
H^k(A_W,\F_p)\cong H^k(W,\F_p)\oplus H^k(W,\signmod).
\end{equation}
Now Theorem \ref{main-thm} implies $H^k(W,\signmod)=0$
for $0<k<p-2$.
\end{proof}
The proof also implies  $H^k(W,\F_p)=0$ holds for $0<k<p-2$,
however, the first author proved a much stronger result in
\cite{preprint-coxeter-p-local}.
Now let $\Sigma_p$ be the symmetric group on $p$ letters and $A_p$
the alternating group on $p$ letters.
It is known that
$H^{p-2}(\Sigma_p,\signmod)\cong\F_p$ (see \cite{MR1644252}*{pp.~74--75}).
So vanishing ranges in Theorem \ref{thm:signmod} are best possible.
Moreover, the isomorphism (\ref{sign-split}) implies
$H^{p-2}(A_p,\F_p)\not=0$, which shows vanishing ranges in Theorem \ref{main-thm}
are also best possible.

\section{Alternating subgroups of infinite Coxeter groups}\label{sec:infinite}
In general, Theorem \ref{main-thm} no longer holds for $p$-free Coxeter groups of infinite order.
For example, let $D_\infty=\langle s,t\mid s^2=t^2=1\rangle\cong\Z/2\ast\Z/2$ be the
infinite dihedral group. By definition, it is $p$-free for all $p$ since $m(s,t)=\infty$.
On the other hand, the alternating subgroup $A_{D_\infty}$ of $D_\infty$ is the infinite cyclic
group generated by $st\in D_\infty$ so that $H^1(A_{D_\infty},\F_p)\cong\F_p$ for all $p$.
More generally, we have the following proposition:
\begin{pro}
If $W$ is an infinite Coxeter group all of whose proper parabolic subgroups
are of finite order, then $H^{|S|-1}(A_W,\F_p)\not=0$ for all $p$.
\end{pro}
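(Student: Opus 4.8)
The plan is to sidestep the direct $\F_p$ computation and instead pin down the rational cohomology $H^*(A_W,\Q)$ completely, and only then transport the top class to every prime by an integral argument. The key observation is that, over $\Q$, the finiteness of the proper parabolic subgroups collapses the equivariant spectral sequence onto the orbit space, which Proposition \ref{orbit-space} identifies with a sphere.

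First I would feed the action of $A_W$ on the Coxeter complex $X_W$ into the machinery of \S\ref{sec:equiv-coho} with the trivial module $A=\Q$. Since $W$ is infinite, $X_W$ is contractible by Proposition \ref{serre-cont}, so $H^k(X_W,\Q)=0$ for all $k>0$, and Lemma \ref{lem:Serre-ss} (taking $d$ arbitrarily large) gives $H^k_{A_W}(X_W,\Q)\cong H^k(A_W,\Q)$ for all $k$. On the other hand, every simplex of $X_W$ has the form $\sigma=wW_T$ with $T\subsetneq S$, so its isotropy group $(A_W)_\sigma\cong A_{W_T}$ (computed exactly as in the proof of Theorem \ref{main-thm}) is the alternating subgroup of a proper parabolic subgroup $W_T$; since $W_T$ is finite by hypothesis, so is $(A_W)_\sigma$. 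Hence $H^j((A_W)_\sigma,\Q)=0$ for every $j>0$, so by \eqref{eq:E_1-Shapiro} the spectral sequence \eqref{eq:Leray-ss} has $E_1^{ij}=0$ for $j>0$ and collapses onto its bottom row. Combining this with Lemma \ref{lem:coho-orbit} and Proposition \ref{orbit-space},
\[
H^k(A_W,\Q)\cong H^k_{A_W}(X_W,\Q)\cong E_2^{k,0}\cong H^k(X_W/A_W,\Q)\cong H^k(\sph^{|S|-1},\Q),
\]
and in particular $H^{|S|-1}(A_W,\Q)\cong\Q\neq 0$.

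It then remains to pass from $\Q$ to $\F_p$. By the universal coefficient theorem $H^{|S|-1}(A_W,\Q)\cong\Hom_\Z(H_{|S|-1}(A_W,\Z),\Q)$ (the relevant $\mathrm{Ext}$ term vanishes because $\Q$ is injective over $\Z$), so its nonvanishing forces $H_{|S|-1}(A_W,\Z)$ to contain an element of infinite order, i.e.\ a free summand $\cong\Z$. This free summand reduces nontrivially modulo every prime, so $\Hom_\Z(H_{|S|-1}(A_W,\Z),\F_p)\neq 0$ for all $p$; since the universal coefficient sequence provides a surjection from $H^{|S|-1}(A_W,\F_p)$ onto this group, I conclude $H^{|S|-1}(A_W,\F_p)\neq 0$ for every $p$.

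I expect the only genuinely substantive point to be this $\Q$-to-$\F_p$ bridge. The naive hope of reading the nonvanishing straight off the mod $p$ spectral sequence fails: for primes $p$ dividing the order of some finite parabolic subgroup the higher rows $E_1^{ij}$ $(j>0)$ no longer vanish and can in principle carry differentials into the top-degree class, so one cannot argue its survival cellwise. Routing through integral homology avoids this entirely, since the rational collapse produces a genuinely \emph{free} top-degree homology class, and a free class survives modulo all primes at once — which is precisely the uniform ``for all $p$'' assertion. The finiteness hypothesis on the proper parabolic subgroups is used exactly once, to guarantee the rational collapse, and the infinite dihedral example in the text shows that it is also what makes the top-degree class appear in the first place.
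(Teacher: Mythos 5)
Your proposal is correct and takes essentially the same route as the paper's own proof: collapse the equivariant spectral sequence \eqref{eq:Leray-ss} over $\Q$ using contractibility of $X_W$ (Proposition \ref{serre-cont}), finiteness of the stabilizers $(A_W)_\sigma\cong A_{W_T}$, and Lemma \ref{lem:coho-orbit} together with Proposition \ref{orbit-space}, obtaining $H^*(A_W,\Q)\cong H^*(\sph^{|S|-1},\Q)$, and then invoke the universal coefficient theorem. Your only addition is to spell out the $\Q$-to-$\F_p$ bridge that the paper dispatches in one line; just note that your inference ``element of infinite order, i.e.\ a free summand $\cong\Z$'' tacitly uses that $H_{|S|-1}(A_W,\Z)$ is finitely generated (it fails for groups like $\Q$ or $\Z[1/p]$), which does hold here since $A_W$ acts cocompactly with finite stabilizers on the finite-dimensional contractible complex $X_W$.
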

\begin{proof}
Note first $H^*(A_W,\Q)\cong H^*_{A_W}(X_W,\Q)$
because $X_W$ is contractible by Proposition \ref{serre-cont}.
Consider the spectral sequence (\ref{eq:Leray-ss}) and (\ref{eq:E_1-Shapiro}) with
$\Q$-coefficients:
\[
E_1^{ij}=\prod_{\sigma\in\mathcal{E}_i}H^j((A_W)_\sigma,\Q)
\Rightarrow H^{i+j}_{A_W}(X_W,\Q).
\]
Since the isotropy subgroup
$(A_W)_\sigma$ is finite for any $\sigma$ by the assumption, 
we see that $H^j((A_W)_\sigma,\Q)=0$ for $j>0$ and
hence $E_1^{*,j}=0$ for $j>0$.
In addition, $E_2^{*,0}\cong H^*(\sph^{|S|-1},\Q)$ by Proposition \ref{lem:coho-orbit}
and \ref{orbit-space}. As a result, we have 
\[H^*(A_W,\Q)\cong H^*(\sph^{|S|-1},\Q)\]
and hence $H^{|S|-1}(A_W,\F_p)\not=0$ for any prime $p$ in virtue of the
universal coefficient theorem.
\end{proof}
The assumption that all proper parabolic subgroups of $W$ are finite is somewhat restrictive.
Apart from finite Coxeter groups,
it holds if and only if (a) $W$ is an irreducible Euclidean reflection group or
(b) $W$ is a hyperbolic reflection group whose fundamental domain is a closed simplex contained
entirely in the interior of the hyperbolic space.
See \cite{book-building}*{Remark 3.29} for precise.
Now we prove the weak version of Theorem \ref{main-thm} for those Coxeter groups.
\begin{thm}\label{thm:infinite}
Let $W$ be an infinite $p$-free Coxeter group all of whose proper parabolic subgroups
are of finite order.
Then $H^k(A_W,\F_p)=0$ for $0<k<\min\{p-2,|S|-1\}$.
\end{thm}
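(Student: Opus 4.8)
The plan is to invoke Proposition \ref{pro:equivariant} for the group $G=A_W$ acting on the Coxeter complex $X=X_W$, with coefficients $A=\F_p$ and vanishing bound $d:=\min\{p-2,|S|-1\}$. Since $X_W$ is finite dimensional of dimension $|S|-1$, admissible as shown in \S\ref{complex}, and connected (being contractible, by Proposition \ref{serre-cont}), the structural hypotheses of that proposition are automatic, and I would only need to check its two numbered conditions for this particular value of $d$.

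For condition (1), I would argue exactly as in the proof of Theorem \ref{main-thm} that the isotropy subgroup of a simplex $\sigma=wW_T$ satisfies $(A_W)_\sigma\cong A_{W_T}$. The crucial point is that here $T\subsetneq S$, so $W_T$ is a \emph{proper} parabolic subgroup and is therefore finite by hypothesis; it is moreover $p$-free, its Coxeter matrix being the restriction of that of $W$. Consequently Theorem \ref{main-thm} applies directly to the finite $p$-free group $W_T$ and gives $H^k((A_W)_\sigma,\F_p)\cong H^k(A_{W_T},\F_p)=0$ for $0<k<p-2$, and in particular for $0<k<d$. In contrast to the finite case, no induction on $|S|$ is needed, since the main theorem is already available for all the finite proper parabolics.

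For condition (2), I would use the two remaining propositions. Because $W$ is infinite, Serre's Proposition \ref{serre-cont} makes $X_W$ contractible, so $H^k(X_W,\F_p)=0$ for \emph{all} $k>0$; this is strictly better than the finite situation and imposes no constraint. The orbit space is computed by Proposition \ref{orbit-space}: $X_W/A_W\cong\sph^{|S|-1}$, so $H^k(X_W/A_W,\F_p)=0$ precisely for $0<k<|S|-1$. Both ranges contain $0<k<d$, so condition (2) holds, and Proposition \ref{pro:equivariant} then delivers $H^k(A_W,\F_p)=0$ for $0<k<d=\min\{p-2,|S|-1\}$.

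The step I expect to carry all the weight is this orbit-space input. There is no real technical difficulty in the argument---it is an assembly of pieces already proved---but the conceptual heart is understanding where the bound $\min\{p-2,|S|-1\}$ comes from: the contractibility of $X_W$ removes the constraint that limited the finite case to $p-2$, while it is precisely the fact that $X_W/A_W$ is a sphere $\sph^{|S|-1}$ rather than a contractible space that reintroduces the competing bound $|S|-1$. Ensuring that both $p-2$ (from the isotropy subgroups via Theorem \ref{main-thm}) and $|S|-1$ (from the orbit sphere) are simultaneously respected is exactly what produces the minimum in the statement.
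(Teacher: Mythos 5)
Your proposal is correct and follows essentially the same route as the paper's own proof: both apply Proposition \ref{pro:equivariant} to the $A_W$-action on $X_W$ with $d=\min\{p-2,|S|-1\}$, verifying condition (1) via $(A_W)_\sigma\cong A_{W_T}$ for finite $p$-free proper parabolics $W_T$ and Theorem \ref{main-thm}, and condition (2) via Propositions \ref{serre-cont} and \ref{orbit-space}. Your added remarks---that $p$-freeness is inherited by parabolics, that no induction is needed, and the explanation of where the minimum comes from---are accurate elaborations of what the paper leaves implicit.
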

\begin{proof}
Set $d=\min\{p-2,|S|-1\}$ and consider the Coxeter complex $X_W$.
For each $k$-simplex $\sigma$ of $X_W$, the isotropy subgroup $(A_W)_\sigma$
is isomorphic to $A_{W_T}$ for some $T\subsetneq S$ as in the proof of Theorem \ref{main-thm},
and hence
$H^k((A_W)_\sigma,\F_p)=0$ for $0<k<d$ by the assumption and Theorem
\ref{main-thm}.
On the other hand, noting $d\leq |S|-1$,
$H^k(X_W,\F_p)=H^k(X_W/A_W,\F_p)=0$ for $0<k<d$
by Proposition \ref{serre-cont} and \ref{orbit-space}.
Now the theorem follows from Proposition \ref{pro:equivariant}.
\end{proof}
The proof of the following corollary is similar to the one for Theorem \ref{thm:signmod}:
\begin{cor} Under the assumption of Theorem \ref{thm:infinite},
$H^k(W,\F_p[-1])=0$ holds for $k<\min\{p-2,|S|-1\}$.
\end{cor}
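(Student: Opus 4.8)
The plan is to mimic the proof of Theorem \ref{thm:signmod}, replacing the input from Theorem \ref{main-thm} by that of Theorem \ref{thm:infinite}. Set $d=\min\{p-2,|S|-1\}$. The key point is that the splitting argument used for finite Coxeter groups is purely representation-theoretic and nowhere uses finiteness of $W$: for an odd prime $p$ the induced module $M:=\Ind^W_{A_W}(\F_p)$ decomposes as $M\cong\F_p\oplus\signmod$, since $[W:A_W]=2$ is invertible in $\F_p$. Combining this with Shapiro's lemma $H^k(W,M)\cong H^k(A_W,\F_p)$ yields the decomposition
\[
H^k(A_W,\F_p)\cong H^k(W,\F_p)\oplus H^k(W,\signmod),
\]
valid for every $k\geq 0$ and every Coxeter group $W$, finite or not.

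First I would record the case $k=0$ separately, since it is genuinely needed (for instance when $|S|=2$ the range $k<d$ reduces to $k=0$ alone). One has $H^0(W,\signmod)\cong\signmod^W$, and because any Coxeter generator $s\in S$ acts on $\signmod$ by $-1$ while $p$ is odd, the only invariant is $0$; thus $H^0(W,\signmod)=0$. Next, for $0<k<d$, Theorem \ref{thm:infinite} gives $H^k(A_W,\F_p)=0$, and the displayed decomposition forces each direct summand to vanish, so in particular $H^k(W,\signmod)=0$. Together these two steps establish $H^k(W,\signmod)=0$ for all $k<d$, which is the assertion of the corollary.

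I do not anticipate a genuine obstacle, as the argument is formally identical to that of Theorem \ref{thm:signmod}; the only point requiring a moment's care is to confirm that both the decomposition $M\cong\F_p\oplus\signmod$ and Shapiro's lemma hold for arbitrary (possibly infinite) $W$, so that the single place where finiteness entered the original proof---namely the appeal to Theorem \ref{main-thm}---can be swapped cleanly for Theorem \ref{thm:infinite} without introducing any further hypotheses beyond those already imposed there.
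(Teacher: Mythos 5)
Your proposal is correct and is exactly the argument the paper intends: it states that the proof is ``similar to the one for Theorem \ref{thm:signmod}'', i.e.\ the splitting $\Ind^W_{A_W}(\F_p)\cong\F_p\oplus\F_p[-1]$ (valid since $p$ is odd), Shapiro's lemma for the index-two subgroup $A_W$, the vanishing $H^0(W,\F_p[-1])\cong\F_p[-1]^W=0$, and the input of Theorem \ref{thm:infinite} in place of Theorem \ref{main-thm}. Your added checks---that Shapiro and the module decomposition need no finiteness of $W$, and the explicit handling of $k=0$---are correct and merely make explicit what the paper leaves implicit.
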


\section*{Appendix}
The following is the table for the Coxeter graph $\Gamma$, 
the order $|W(\Gamma)|$ of the corresponding 
finite irreducible Coxeter group $W(\Gamma)$, and
the range of odd prime numbers $p$ such that $W(\Gamma)$ is $p$-free.

\[
\begin{array}{ccc}
\Gamma & |W(\Gamma)| & \text{$p$-freeness} \\ \hline
A_1 & 2 & p\geq 3 \\
A_n\,(n\geq 2) & (n+1)! & p\geq 5 \\
B_2 & 8 & p\geq 3\\
B_n\,(n\geq 3) & 2^n n! & p\geq 5\\
D_n\,(n\geq 4) & 2^{n-1} n! & p\geq 5\\
E_6 &  2^7\cdot 3^4\cdot 5 & p\geq 5 \\
E_7 & 2^{10}\cdot 3^4\cdot 5\cdot 7 & p\geq 5 \\
E_8 & 2^{14}\cdot 3^5\cdot 5^2\cdot 7 & p\geq 5 \\
F_4 & 2^7\cdot 3^2 & p\geq 5 \\
H_3 & 2^3\cdot 3\cdot 5 & p\geq 7 \\
H_4 & 2^6\cdot 3^2\cdot 5^2 & p\geq 7 \\
I_2(m)\,(m\geq 3) & 2m & p\not|\ m
\end{array}
\]

\begin{ack}
The first author was partially supported by JSPS KAKENHI Grant Number
26400077.
\end{ack}





\begin{bibdiv}
\begin{biblist}

\bib{book-building}{book}{
   author={Abramenko, Peter},
   author={Brown, Kenneth S.},
   title={Buildings},
   series={Graduate Texts in Mathematics},
   volume={248},
   note={Theory and applications},
   publisher={Springer},
   place={New York},
   date={2008},
   pages={xxii+747},
   isbn={978-0-387-78834-0},
   review={\MR{2439729 (2009g:20055)}},
   doi={10.1007/978-0-387-78835-7},
}

\bib{preprint-coxeter-p-local}{article}{
author={Akita, Toshiyuki},
title={Vanishing theorem for the $p$-local homology of Coxeter groups},
date={2014},
eprint={http://arxiv.org/abs/1406.0915}
}

\bib{MR1644252}{book}{
   author={Benson, D. J.},
   title={Representations and cohomology. I},
   series={Cambridge Studies in Advanced Mathematics},
   volume={30},
   edition={2},
   note={Basic representation theory of finite groups and associative
   algebras},
   publisher={Cambridge University Press, Cambridge},
   date={1998},
   pages={xii+246},
   isbn={0-521-63653-1},
   review={\MR{1644252}},
}


\bib{bourbaki}{book}{
   author={Bourbaki, Nicolas},
   title={\'El\'ements de math\'ematique},
   language={French},
   note={Groupes et alg\`ebres de Lie. Chapitres 4, 5 et 6. [Lie groups and
   Lie algebras. Chapters 4, 5 and 6]},
   publisher={Masson},
   place={Paris},
   date={1981},
   pages={290},
   isbn={2-225-76076-4},
   review={\MR{647314 (83g:17001)}},
}

\bib{MR2417024}{article}{
   author={Brenti, Francesco},
   author={Reiner, Victor},
   author={Roichman, Yuval},
   title={Alternating subgroups of Coxeter groups},
   journal={J. Combin. Theory Ser. A},
   volume={115},
   date={2008},
   number={5},
   pages={845--877},
   issn={0097-3165},
   review={\MR{2417024 (2009e:05319)}},
   doi={10.1016/j.jcta.2007.10.004},
}

\bib{brown}{book}{
   author={Brown, Kenneth S.},
   title={Cohomology of groups},
   series={Graduate Texts in Mathematics},
   volume={87},
   publisher={Springer-Verlag},
   place={New York},
   date={1982},
   pages={x+306},
   isbn={0-387-90688-6},
   review={\MR{672956 (83k:20002)}},
}

\bib{MR2015284}{article}{
   author={Burichenko, Vladimir P.},
   title={Extensions of cocycles, Cohen-Macaulay geometries, and a vanishing
   theorem for cohomology of alternating groups},
   journal={J. Algebra},
   volume={269},
   date={2003},
   number={2},
   pages={402--421},
   issn={0021-8693},
   review={\MR{2015284 (2004i:20097)}},
   doi={10.1016/S0021-8693(03)00437-X},
}


\bib{humphreys}{book}{
   author={Humphreys, James E.},
   title={Reflection groups and Coxeter groups},
   series={Cambridge Studies in Advanced Mathematics},
   volume={29},
   publisher={Cambridge University Press},
   place={Cambridge},
   date={1990},
   pages={xii+204},
   isbn={0-521-37510-X},
   review={\MR{1066460 (92h:20002)}},
}

\bib{MR1875898}{article}{
   author={Kleshchev, Alexander S.},
   author={Nakano, Daniel K.},
   title={On comparing the cohomology of general linear and symmetric
   groups},
   journal={Pacific J. Math.},
   volume={201},
   date={2001},
   number={2},
   pages={339--355},
   issn={0030-8730},
   review={\MR{1875898 (2002i:20063)}},
   doi={10.2140/pjm.2001.201.339},
}

\bib{MR0486097}{article}{
   author={Maxwell, George},
   title={The Schur multipliers of rotation subgroups of Coxeter groups},
   journal={J. Algebra},
   volume={53},
   date={1978},
   number={2},
   pages={440--451},
   issn={0021-8693},
   review={\MR{0486097 (58 \#5885)}},
}

\bib{serre}{article}{
   author={Serre, Jean-Pierre},
   title={Cohomologie des groupes discrets},
   language={French},
   conference={
      title={Prospects in mathematics (Proc. Sympos., Princeton Univ.,
      Princeton, N.J., 1970)},
   },
   book={
      publisher={Princeton Univ. Press},
      place={Princeton, N.J.},
   },
   date={1971},
   pages={77--169. Ann. of Math. Studies, No. 70},
   review={\MR{0385006 (52 \#5876)}},
}

\end{biblist}
\end{bibdiv}

\end{document}